\documentclass[12pt]{amsart}

\usepackage[T1]{fontenc}
\usepackage{lmodern}
\usepackage{amsmath,amssymb,amsthm,mathtools}
\usepackage{enumitem}
\usepackage{microtype}
\usepackage{geometry}
\usepackage[colorlinks, linkcolor=blue, citecolor=red, urlcolor=blue, pagebackref, hypertexnames=false]{hyperref}
\numberwithin{equation}{section}

\newtheorem{theorem}{Theorem}[section]
\newtheorem{proposition}[theorem]{Proposition}
\newtheorem{lemma}[theorem]{Lemma}
\newtheorem{corollary}[theorem]{Corollary}
\theoremstyle{remark}
\newtheorem{remark}[theorem]{Remark}

\newcommand{\R}{\mathbb{R}}

\newcommand{\dd}{\mathrm{d}x}
\newcommand{\dds}{\mathrm{d}s}
\newcommand{\im}{\operatorname{Im}}
\newcommand{\re}{\operatorname{Re}}

\title[Finite-time blow-up for a quadratic NLS system]{Finite-time blow-up for the four-dimensional mass-critical quadratic nonlinear Schr\"odinger system without mass resonance}
	
\author[N.~U.~C.~Nguyen, V.~D.~Dinh]{Ngoc Uyen Cong Nguyen and Van Duong Dinh}

\address[N.~U.~C. Nguyen]{Claire and Leo LLC, Houston, Texas, USA}
\email{contact@uyencong.com}

\address[V.~D.~Dinh]{Claire and Leo LLC, Houston, Texas, USA}
\email{contact@duongdinh.com}
    
\date{August 18, 2026}

\subjclass[2020]{35Q55, 35B44, 35A01}
\keywords{quadratic nonlinear Schr\"odinger system, mass-critical, finite-time blow-up, non-mass-resonance, localized virial identity, radial interpolation}

\begin{document}

\begin{abstract}
We study the focusing quadratic nonlinear Schr\"odinger system
\[
\begin{cases}
 i\partial_t u+\Delta u=-2v\overline{u},\\
 i\partial_t v+\kappa\Delta v=-u^2,
\end{cases}
\qquad (t,x)\in I\times\mathbb R^4,
\]
where $\kappa>0$. In the non-mass-resonant case $\kappa\neq \frac12$, previous works of Inui--Kishimoto--Nishimura~\cite{IKN} and Dinh--Forcella~\cite{DF} showed that radial solutions with negative energy must either blow up in finite time or exist globally while their $H^1$-norm grows without bound.

In this paper, we prove that every radial $H^1\times H^1$ solution with negative energy blows up in finite time, both forward and backward in time. No finite-variance assumption is required.

The main ingredient is a localized virial argument based on the bounded exponential weight
\[
\nabla\phi_R(x)=2x e^{-|x|^2/R^2}.
\]
A radial weighted interpolation estimate allows us to control the nonlinear error terms by the corresponding weighted kinetic term, up to an $O(R^{-2})$ error depending only on the conserved mass. Moreover, the localized virial quantity itself can be bounded directly in terms of the same weighted kinetic defect. Combining these estimates yields a superlinear Riccati-type differential inequality, which cannot persist for all time and therefore forces finite-time blow-up.
\end{abstract}

\maketitle

\section{Introduction}
\setcounter{equation}{0}
We study the Cauchy problem
\begin{equation}\label{eq:system}
\begin{cases}
 i\partial_t u+\Delta u=-2v\overline{u},\\
 i\partial_t v+\kappa\Delta v=-u^2,\\
 (u,v)|_{t=0}=(u_0,v_0)\in H^1(\R^4)\times H^1(\R^4),
\end{cases}
\end{equation}
where $\kappa>0$.  The value $\kappa=\frac12$ is the mass-resonance condition, while $\kappa\neq\frac12$ is the non-mass-resonant regime.  The system is invariant under the mass-critical scaling
\begin{equation*}
 u_\lambda(t,x)=\lambda^2u(\lambda^2t,\lambda x), \qquad v_\lambda(t,x)=\lambda^2v(\lambda^2t,\lambda x).
\end{equation*}
Its conserved mass and energy are
\begin{align}
 M(u,v)&:=\|u\|_{L^2}^2+2\|v\|_{L^2}^2,\tag{mass}\\
 E(u,v)&:=\frac12\left(\|\nabla u\|_{L^2}^2+\kappa\|\nabla v\|_{L^2}^2\right)
 -\re\int_{\R^4}v\overline{u}^{2}\dd.\tag{energy}
\end{align}
We also write
\begin{equation*}
 T(u,v):=\|\nabla u\|_2^2+\kappa\|\nabla v\|_2^2, \qquad P(u,v):=\re\int_{\R^4}v\overline{u}^{2}\dd,
\end{equation*}
so that $E=\frac12T-P$.

The local $H^1\times H^1$ theory, conservation laws, and blow-up alternative for \eqref{eq:system} are standard; see Hayashi--Ozawa--Tanaka~\cite{HOT} and the references therein.  In particular, for $(u_0,v_0)\in H^1(\R^4) \times H^1(\R^4)$, there is a unique maximal solution
\[
 (u,v)\in C((-T_-,T_+);H^1(\R^4)\times H^1(\R^4)) \cap L^2_{\textrm{loc}}((-T_-,T_+); W^{1,4}(\R^4)\times W^{1,4}(\R^4)),
\]
and if $T_+<\infty$, then
\begin{equation}\label{eq:blowup-alternative}
\limsup_{t\uparrow T_+}\big(\|u(t)\|_{H^1}+\|v(t)\|_{H^1}\big)=\infty,
\end{equation}
with the analogous statement backward in time. Radiality is preserved by uniqueness.

For negative-energy radial data in the non-resonant case, Inui--Kishimoto--Nishimura~\cite{IKN} proved a finite-time blow-up or grow-up alternative, with grow-up along a sequence of times. Dinh--Forcella~\cite{DF} later sharpened this infinite time grow-up by showing that if a radial negative-energy solution were global forward in time, then
\begin{equation}\label{eq:DF-growup}
 T(u(t),v(t))\ge Ct^2
\end{equation}
for all sufficiently large $t$. Their argument therefore leaves open a global branch whose kinetic energy diverges at least quadratically.

The purpose of this paper is to exclude that branch.

\begin{theorem}[Finite-time blow-up]\label{thm:main}
Let $d=4$ and $0<\kappa\neq\frac12$.  Suppose that $(u_0,v_0)\in H^1_{\mathrm{rad}}(\R^4)\times H^1_{\mathrm{rad}}(\R^4)$ satisfy $E(u_0,v_0)<0$. Then both maximal lifespan endpoints are finite:
\[
 T_+<\infty,\qquad T_-<\infty.
\]
In particular,
\[
\limsup_{t\uparrow T_+} (\|u(t)\|_{H^1} + \|v(t)\|_{H^1}) = \infty, \qquad \limsup_{t\downarrow -T_-}(\|u(t)\|_{H^1}+\|v(t)\|_{H^1})=\infty.
\]
\end{theorem}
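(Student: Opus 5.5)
We argue by contradiction in the forward direction; the backward statement then follows from time reversal $(u,v)(t,x)\mapsto(\overline{u},\overline{v})(-t,x)$, which preserves the system, radiality, mass and energy. Suppose $T_+=\infty$. We work with the localized virial quantity
\[
V_R(t):=2\im\int_{\R^4}\nabla\phi_R\cdot\big(\nabla u\,\overline{u}+\nabla v\,\overline{v}\big)\,\dd,\qquad \nabla\phi_R(x)=2xe^{-|x|^2/R^2},
\]
with the $v$-part weighted so that $V_R$ is the time derivative of $\int\phi_R(|u|^2+2|v|^2)$. This is the combination that makes the mass-resonance obstruction appear only through cross terms.

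\textbf{Step 1 (virial identity).} Differentiating and using the equations, I expect
\[
V_R'(t)=8E(u_0,v_0)-\mathcal{K}_R(t)+\mathcal{N}_R(t)+O(R^{-2}M).
\]
Here $\mathcal{K}_R\ge0$ is a weighted kinetic defect, built from terms such as $\int(1-e^{-|x|^2/R^2})|\nabla u|^2$ and the radial Hessian deficit of $\phi_R$ (this is where radiality is used). The term $\mathcal{N}_R$ collects the nonlinear error $\re\int(\Delta\phi_R-8)\,v\overline{u}^2$. In the non-resonant case one must check that the phase-mismatch term, which for $\kappa\neq\frac12$ comes with $\im\int v\overline{u}^2$, does not survive. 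For quadratic interactions with this mass weighting the non-resonance shows up only in $\frac{d}{dt}$ of the variance-type moment, not in the virial derivative itself. I will verify this by direct computation, using the Hessian $\partial_{jk}\phi_R$ with both the $\Delta$ and $\kappa\Delta$ contributions.

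\textbf{Step 2 (radial weighted interpolation).} Using the radial Strauss-type bound $|x|^{3/2}|f(x)|\lesssim\|f\|_2^{1/2}\|\nabla f\|_2^{1/2}$, localized to the region where $1-e^{-|x|^2/R^2}\gtrsim1$ and applied with the weight inserted, I aim for
\[
|\mathcal{N}_R|\le \tfrac12\mathcal{K}_R+C(M)R^{-2}.
\]
Then choose $R$ large so that $V_R'\le 4E<0$. This alone only gives linear decrease of $V_R$, so the superlinear mechanism is needed.

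\textbf{Step 3 (Riccati inequality).} Because $\nabla\phi_R$ is bounded with $|\nabla\phi_R|\lesssim R$, Cauchy--Schwarz bounds $V_R$ by mass times gradient. The key claim, as the abstract indicates, is the sharper estimate
\[
|V_R|^2\le C(M,R)\big(\mathcal{K}_R+|E|\big),
\]
obtained by writing $|\nabla\phi_R|^2\lesssim R^2(1-e^{-|x|^2/R^2})e^{-|x|^2/R^2}$-type weights, plus a compact-region contribution controlled by energy via Gagliardo--Nirenberg. Then $V_R'\le -c\,V_R^2$ once $V_R<0$, which happens after some finite time by Step 2. Comparing with the ODE $y'=-cy^2$ forces $V_R\to-\infty$ in finite time, contradicting global existence with $V_R$ continuous.

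\textbf{Main obstacle.} I expect Step 3 to be the hardest part: bounding $V_R$ by the kinetic \emph{defect} rather than by the full $T$. I would first try splitting $\R^4$ at $|x|\sim R$. On $|x|\gtrsim R$, the factor $|x|e^{-|x|^2/R^2}$ is dominated by the defect weight. On $|x|\lesssim R$, one compares with $8E-V_R'$ directly, using the Hessian structure of the exponential weight.
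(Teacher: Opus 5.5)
\textbf{Verdict: there is a genuine gap.} Your architecture matches the paper's: exponential weight, a defect-absorbing interpolation for the nonlinear error, a bound of the virial quantity by the defect, and a Riccati inequality. But the step you single out as the main obstacle is planned along a route that cannot work.

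\textbf{Step 3.} You split at $|x|\sim R$ and propose to control the interior part of $V_R$ ``by energy via Gagliardo--Nirenberg''. For focusing mass-critical data the energy gives no such control. Take a smooth radial pair with zero energy and its $L^2$-critical rescalings $\lambda^2(f,g)(\lambda\,\cdot)$. Then $M$, $E$ and $\int\omega_R|\nabla f_\lambda|^2$ stay bounded as $\lambda\to\infty$, while $\int_{|x|<R}|\nabla f_\lambda|^2\sim\lambda^2$. So any interior estimate that passes through the unweighted gradient cannot be closed.

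The missing observation is one you already wrote down without noticing that it suffices. With $s=|x|/R$, the bound $|\nabla\phi_R|^2=4R^2s^2e^{-2s^2}\le4R^2(1-e^{-s^2})=4R^2\omega_R$ holds on \emph{all} of $\R^4$, since it is just $e^{s^2}\ge1+s^2$. Near the origin both sides vanish like $|x|^2$, which is the whole reason for the exponential profile. Set $\mathcal D_R=\int\omega_R(|\nabla u|^2+\kappa|\nabla v|^2)$, a lower bound for your $\mathcal K_R$. Cauchy--Schwarz and mass conservation then give $|V_R|\le C_\kappa R\,M^{1/2}\mathcal D_R^{1/2}$ directly, with no splitting and no $|E|$ term (Proposition~\ref{prop:virial}). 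Hence $y=-V_R$ satisfies $y'\gtrsim\mathcal D_R\gtrsim R^{-2}M^{-1}y^2$.

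\textbf{Step 2.} This step has the same blind spot. Here $8-\Delta\phi_R=8\omega_R+4s^2e^{-s^2}\simeq\min\{|x|^2/R^2,1\}$, which does not vanish on $|x|<R$, unlike a flattened multiplier. Restricting Strauss to $\{\omega_R\gtrsim1\}$ therefore leaves $\int_{|x|<R}\frac{|x|^2}{R^2}|v||u|^2$. The unweighted Strauss bound controls this only through the full norms $\|\nabla u\|_2,\|\nabla v\|_2$, which cannot be absorbed into the defect. You need a weighted estimate at every scale $|x|\ll R$. Either of the following works:
\begin{enumerate}[label=(\roman*)]
\item The paper's route: dyadic annuli $2^jR<|x|<2^{j+1}R$, $j\in\mathbb Z$, each rescaled $L^2$-critically to a fixed annulus, using $\min\{\rho_j^2/R^2,1\}\rho_j^{-2}\le R^{-2}$ (Proposition~\ref{prop:weighted}).
\item Inserting $\omega_R$ into the Strauss argument, which gives $|x|^2|f(x)|\lesssim R^{1/2}\|f\|_2^{1/2}D_R(f)^{1/4}$ for radial $f$ and $|x|\le R$.
\end{enumerate}

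\textbf{Step 1.} Your parenthetical on the $v$-weighting is wrong. Making $V_R=\frac{d}{dt}\int\phi_R(|u|^2+2|v|^2)$ means using $\nabla u\,\overline u+2\kappa\nabla v\,\overline v$. For $\kappa\neq\frac12$ its derivative has kinetic part $|\nabla u|^2+2\kappa^2|\nabla v|^2$ and the uncontrolled term $4(1-2\kappa)\re\int\nabla\phi_R\cdot\nabla v\,\overline u^{2}$; this is exactly the non-resonance obstruction. Keep the $(1,1)$-weighted localized momentum you actually displayed. It is not the derivative of a weighted mass when $\kappa\neq\frac12$, but its derivative is exactly \eqref{eq:virial-id}, with leading term $16E$ (not $8E$) and no $\im\int v\overline u^2$ term, for every $\kappa>0$.
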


Let us briefly explain the main idea of the proof. Our argument is based on a localized virial method in the spirit of Ogawa--Tsutsumi \cite{OT}. For a radial weight $\phi_R$, we consider the localized virial quantity
\begin{equation*}
\mathcal M_R(t):= 2\im\int_{\mathbb R^4} \nabla\phi_R(x)\cdot \left(\nabla u(t,x)\overline{u(t,x)}+\nabla v(t,x)\overline{v(t,x)}\right)\dd.
\end{equation*}
It is useful first to recall the mechanism in the previous work of Dinh and Forcella \cite{DF}. There, the multiplier is a truncated quadratic weight: it agrees with $|x|^2$ on $|x|\le R$ and is constant on $|x|\ge 2R$. After estimating the localization errors, the resulting virial inequality has the schematic form
\[
\mathcal M_R'(t) \le 16E(u_0,v_0)-\mathcal K_R(t)+o_R(1),
\]
where $\mathcal K_R(t)\ge0$ is a localized kinetic contribution. For $R$ sufficiently large, the nonpositive term $-\mathcal K_R(t)$ may be discarded, giving
\[
\mathcal M_R'(t)\le -c<0.
\]
Consequently,
\[
\mathcal M_R(t)\lesssim -t
\]
for large $t$. Combining this with the standard estimate
\[
|\mathcal M_R(t)|\lesssim_R T(u(t),v(t))^{1/2}
\]
yields the quadratic grow-up bound \eqref{eq:DF-growup}. Thus the Dinh--Forcella argument shows that any hypothetical global solution must grow up, but it does not exclude such a global branch, since this growth is still compatible with a finite $H^1$ norm at every finite time.

The main new idea is to retain a suitable weighted kinetic defect in the virial inequality and then couple this defect back to the virial quantity itself. To achieve this, we use the exponentially localized quadratic multiplier
\begin{equation*}
\phi_R(x) = R^2\left(1-e^{-|x|^2/R^2}\right), \qquad \nabla\phi_R(x)= 2x e^{-|x|^2/R^2},
\end{equation*}
and measure its deviation from the exact quadratic virial field by
\begin{equation*}
\omega_R(x) := 1-e^{-|x|^2/R^2}.
\end{equation*}
The crucial feature of this choice is that the same defect $\omega_R$ controls three different quantities: the nonlinear localization error, the loss in the principal kinetic term, and the localized virial quantity itself. If 
\[
\mathcal D_R(t):=\int_{\mathbb R^4}\omega_R\left(|\nabla u|^2+\kappa|\nabla v|^2\right)\dd
\] 
denotes the corresponding weighted kinetic defect, the key estimates take the schematic form
\begin{align*}
\int_{\mathbb R^4}\omega_R|v||u|^2\dd &\le \varepsilon \mathcal D_R(t) +O_{M,\varepsilon}(R^{-2}),\\
\mathcal M_R'(t) &\le 16E(u_0,v_0)-c\mathcal D_R(t)+O_M(R^{-2}),\\
|\mathcal M_R(t)| &\lesssim_\kappa R M(u_0,v_0)^{1/2}\mathcal D_R(t)^{1/2}.
\end{align*}
After fixing $R$ sufficiently large, the negative-energy assumption yields
\[
\mathcal M_R'(t) \le -c_0-c_1\mathcal D_R(t)
\]
for some $c_0,c_1>0$. In particular, $\mathcal M_R(t)$ eventually becomes negative. Moreover, we get
\[ 
-\mathcal M_R'(t) \gtrsim  \mathcal D_R(t) \gtrsim  |\mathcal M_R(t)|^2.
\]
Setting
\[
 y(t):=-\mathcal M_R(t)>0,
\]
we therefore obtain the Riccati-type differential inequality
\[
 y'(t)\gtrsim y(t)^2.
\]
Such an inequality cannot persist for all positive times.

\begin{remark}[No finite variance]
Since the field $\nabla \phi_R=2xe^{-|x|^2/R^2}$ is bounded, $\mathcal M_R$ is defined for all $H^1\times H^1$ solutions and no assumption such as $xu_0,xv_0\in L^2$ is required. 
\end{remark}

\begin{remark}[Resonance]
The proof below does not use $\kappa\neq\frac12$; it works for every $\kappa>0$. Thus it also recovers finite-time blow-up for radial negative-energy data in the resonant case $\kappa=\frac12$, where stronger virial cancellations and finite-time blow-up results were already known; see, for example, Dinh~\cite{Dinh, Dinh-2} and the references in~\cite{DF}.
\end{remark}

\begin{remark}[General systems with quadratic interactions]
The argument developed here is not restricted to the two-component system \eqref{eq:system}. It can be adapted, with only minor modifications, to the more general systems of nonlinear Schr\"odinger equations with quadratic interactions considered in \cite{DF,NP}. Indeed, in the four-dimensional mass-critical setting, the nonlinear part of the energy is cubic, and the localized virial identity produces weighted cubic error terms. These can be controlled componentwise by the weighted interpolation estimate of Proposition~\ref{prop:weighted}, together with Young's inequality. At the same time, the geometric estimates for the exponential multiplier are independent of the number of components.

More precisely, if $\mathbf u=(u_1,\ldots,u_l)$ and the system admits the usual positive conserved charge and energy associated with a homogeneous cubic interaction potential, one introduces the weighted kinetic defect
\[
 \mathcal D_R(\mathbf u) := \sum_{j=1}^l a_j \int_{\mathbb R^4} \omega_R(x)|\nabla u_j(x)|^2\dd,
\]
with the positive coefficients $a_j$ determined by the linear part of the system. The same argument then gives, schematically,
\[
 \mathcal M_R'(t) \le c_E E(\mathbf u_0) -c \mathcal D_R(\mathbf u(t)) +O(R^{-2}),
\]
while
\[
 |\mathcal M_R(t)|^2 \lesssim R^2 Q(\mathbf u_0) \mathcal D_R(\mathbf u(t)),
\]
where $Q$ denotes the conserved charge. Consequently, for radial data with negative energy and $R$ sufficiently large, setting $y(t)=-\mathcal M_R(t)$ again yields
\[
 y'(t)\gtrsim y(t)^2.
\]
Thus the global grow-up branch is ruled out by the same Riccati mechanism. In particular, the finite-time blow-up result proved here extends to the corresponding four-dimensional mass-critical general quadratic systems under the standard structural assumptions of \cite{DF,NP}.
\end{remark}

\begin{remark}[Relation to the three-dimensional cubic NLS]
A closely related question arises for the focusing cubic NLS in three dimensions,
\[
 i\partial_t u+\Delta u+|u|^2u=0.
\]
For $H^1$ solutions, without assuming radial symmetry or finite variance, Holmer--Roudenko \cite{HR} proved that either the solution blows up in finite time or there exists a sequence $t_n\to+\infty$ such that
\[
 \|\nabla u(t_n)\|_{L^2}\longrightarrow\infty.
\]
They further formulated the stronger assertion
\[
 \|\nabla u(t)\|_{L^2}\longrightarrow\infty \qquad\text{as }t\to+\infty
\]
as their \emph{weak conjecture}. Thus, in the nonradial setting, the problem is to upgrade grow-up along some diverging sequence to grow-up along every diverging sequence.

For the quadratic system considered here, Dinh--Forcella \cite{DF} obtained, under the additional assumption of radial symmetry, the stronger grow-up estimate \eqref{eq:DF-growup} for all sufficiently large $t$ on any hypothetical global branch. Hence their result gives the analogue of grow-up along every diverging sequence, but only in the radial setting.

The nonradial problem is therefore substantially more delicate, since one must also account for possible spatial translation of the solution. It would be interesting to investigate whether a translation-adapted version of the virial-defect mechanism developed below can shed light on the Holmer--Roudenko weak conjecture for the three-dimensional focusing cubic NLS.
\end{remark}


\section{A radial weighted estimate}\label{sec:weighted}
\setcounter{equation}{0}
Define
\begin{equation*}
\omega_R(x)=1-e^{-(|x|/R)^2}, \qquad R>0,
\end{equation*}
and, for $f\in H^1(\R^4)$,
\begin{equation*}
 D_R(f):=\int_{\R^4}\omega_R(x)|\nabla f(x)|^2\dd.
\end{equation*}

The elementary comparison
\begin{equation}\label{eq:omega-basic}
 c\min\{s^2,1\}\le 1-e^{-s^2}\le \min\{s^2,1\},
 \qquad s\ge0,
\end{equation}
holds with an absolute $c>0$.

\begin{lemma}[One-dimensional local inequality]\label{lem:1d}
Let $I\Subset I^*$ be bounded intervals.  There exists $C=C(I,I^*)$ such that for every $F\in H^1(I^*)$,
\begin{equation}\label{eq:1d}
 \int_I|F|^3\dds
 \le C\left(\int_{I^*}|F'|^2\dds\right)^{1/4}
 \left(\int_{I^*}|F|^2\dds\right)^{5/4}
 +C\left(\int_{I^*}|F|^2\dds\right)^{3/2}.
\end{equation}
\end{lemma}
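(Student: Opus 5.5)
Since $I\Subset I^*$ are bounded intervals, we would prove \eqref{eq:1d} by first reducing to an $L^\infty$ bound and then using a one-dimensional Gagliardo--Nirenberg argument. The starting point is the trivial splitting
\[
\int_I|F|^3\dds\le \|F\|_{L^\infty(I)}\int_I|F|^2\dds\le \|F\|_{L^\infty(I)}\int_{I^*}|F|^2\dds .
\]
It therefore suffices to establish the local Agmon-type estimate
\begin{equation*}
\|F\|_{L^\infty(I)}\le C\Big(\int_{I^*}|F'|^2\dds\Big)^{1/4}\Big(\int_{I^*}|F|^2\dds\Big)^{1/4}+C\Big(\int_{I^*}|F|^2\dds\Big)^{1/2}.
\end{equation*}
Multiplying this by $\int_{I^*}|F|^2\dds$ gives exactly the two terms of \eqref{eq:1d}, with exponents $1/4,5/4$ and $3/2$.

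To prove the $L^\infty$ bound, we would fix a cutoff $\chi\in C_c^\infty(I^*)$ with $0\le\chi\le1$ and $\chi\equiv1$ on $I$. This is possible because $I\Subset I^*$, and $\|\chi'\|_\infty$ depends only on $I,I^*$. Set $G=\chi F\in H^1_0(I^*)$, extended by zero to $\R$. For $s\in I$, the fundamental theorem of calculus (valid for absolutely continuous $H^1$ functions) gives
\[
|F(s)|^2=|G(s)|^2=2\re\int_{-\infty}^s \overline{G}G'\,\mathrm{d}\sigma\le 2\|G\|_{L^2}\|G'\|_{L^2}.
\]
We then bound $\|G\|_{L^2}\le\|F\|_{L^2(I^*)}$ and $\|G'\|_{L^2}\le\|F'\|_{L^2(I^*)}+\|\chi'\|_\infty\|F\|_{L^2(I^*)}$. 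This yields
\[
\|F\|_{L^\infty(I)}^2\le 2\|F\|_{L^2(I^*)}\|F'\|_{L^2(I^*)}+2\|\chi'\|_\infty\|F\|_{L^2(I^*)}^2 ,
\]
and taking square roots with $\sqrt{a+b}\le\sqrt a+\sqrt b$ gives the claimed bound.

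There is no serious obstacle here; the only point needing care is the localization, i.e.\ that the cutoff error produces only the lower-order term $\big(\int_{I^*}|F|^2\big)^{3/2}$ rather than something involving $F'$ with the wrong homogeneity. This is handled by the product rule estimate above, and the constant $C$ depends on $I,I^*$ only through $\|\chi'\|_\infty$. A density argument (smooth functions are dense in $H^1(I^*)$) justifies the pointwise identity if one prefers not to invoke absolute continuity directly.
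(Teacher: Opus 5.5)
Your proof is correct and follows essentially the same route as the paper. Both use a cutoff equal to $1$ on $I$, the one-dimensional Gagliardo--Nirenberg (Agmon) bound applied to $\chi F$, the product rule to produce the lower-order $\|F\|_{L^2(I^*)}$ term, and $\sqrt{a+b}\le\sqrt a+\sqrt b$; the only difference is that you derive the $L^\infty$ bound directly from the fundamental theorem of calculus instead of quoting it, which makes your argument self-contained.
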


\begin{proof}
Choose $\eta\in C_c^\infty(I^*)$ with $\eta=1$ on $I$.  The one-dimensional Gagliardo--Nirenberg inequality gives
\[
 \|\eta F\|_{L^\infty} \le C\|\eta F\|_{L^2}^{1/2}\|\partial_s(\eta F)\|_{L^2}^{1/2}.
\]
Therefore
\[
 \int_I|F|^3
 \le \|\eta F\|_\infty\|\eta F\|_2^2
 \le C\|F\|_{L^2(I^*)}^{5/2}
 \big(\|F'\|_{L^2(I^*)}+\|F\|_{L^2(I^*)}\big)^{1/2}.
\]
Using $(a+b)^{1/2}\le a^{1/2}+b^{1/2}$ gives \eqref{eq:1d}.
\end{proof}

\begin{proposition}[Radial quadratic-defect interpolation]\label{prop:weighted}
For every radial $f\in H^1(\R^4)$, every $R>0$, and every $\varepsilon>0$,
\begin{equation}\label{eq:weighted-main}
 \int_{\R^4}\omega_R|f|^3\dd
 \le \varepsilon D_R(f) +C_\varepsilon R^{-2}\left(\|f\|_2^{10/3}+\|f\|_2^3\right).
\end{equation}
The constant is independent of $f$ and $R$.
\end{proposition}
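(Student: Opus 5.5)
The plan is to reduce to $R=1$ by scaling, then cover $\R^4\setminus\{0\}$ by dyadic annuli and apply Lemma~\ref{lem:1d} on each annulus to the radial profile after rescaling it to unit size. The annuli near the origin, where $\omega_1\approx|x|^2$, and those far away, where $\omega_1\approx1$, are handled by the same computation. Throughout, $M:=\|f\|_2^2$.

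\emph{Step 1: reduction to $R=1$.} Set $g(y)=R^2f(Ry)$, so that $\|g\|_2=\|f\|_2$ in dimension four. Substituting $x=Ry$ gives
\[
\int\omega_R|f|^3\,dx=R^{-2}\int\omega_1|g|^3\,dy,\qquad D_R(f)=R^{-2}D_1(g).
\]
Hence \eqref{eq:weighted-main} for general $R$ follows from the case $R=1$, and from now on $R=1$.

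\emph{Step 2: per-shell estimate.} Write $f(x)=F(|x|)$. Since $f$ is radial and in $H^1$, $F$ is $H^1$ on every interval bounded away from $0$. For $k\in\mathbb Z$, let $A_k=\{2^k\le|x|\le2^{k+1}\}$ and $A_k^*=\{2^{k-1}\le|x|\le2^{k+2}\}$, and set
\[
m_k=\int_{A_k^*}|f|^2,\qquad d_k=\int_{A_k^*}|\nabla f|^2 .
\]
Apply Lemma~\ref{lem:1d} with the fixed intervals $I=[1,2]$ and $I^*=[1/2,4]$ to $G(s)=F(2^ks)$; the constant $C$ is then uniform in $k$. Converting the one-dimensional integrals back to $\R^4$ costs a factor $r^3\approx2^{3k}$ in each measure. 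Tracking the powers of $2^k$ gives
\[
\int_{A_k}|f|^3\le C\Bigl(2^{-3k/2}d_k^{1/4}m_k^{5/4}+2^{-2k}m_k^{3/2}\Bigr).
\]
On $A_k^*$ we have $\omega_1\approx\min\{2^{2k},1\}$ by \eqref{eq:omega-basic}, with absolute constants. Let $e_k:=\int_{A_k^*}\omega_1|\nabla f|^2\approx\min\{2^{2k},1\}\,d_k$.

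\emph{Step 3: summation.} Fix a shell index $k$ and multiply its bound by the weight $\min\{2^{2k},1\}$.
\begin{itemize}[leftmargin=*]
\item For $k\ge0$ the weight is $\approx1$. The first term is at most $2^{-3k/2}\bigl(\varepsilon e_k+C_\varepsilon m_k^{5/3}\bigr)$ by Young's inequality with exponents $4$ and $4/3$. The second term is $2^{-2k}m_k^{3/2}$.
\item For $k<0$ the weight is $\approx2^{2k}$. The first term becomes $2^{k/2}d_k^{1/4}m_k^{5/4}\approx e_k^{1/4}m_k^{5/4}\le\varepsilon e_k+C_\varepsilon m_k^{5/3}$. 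The second term becomes $m_k^{3/2}$ with no decaying prefactor.
\end{itemize}
The enlarged shells $A_k^*$ have bounded overlap. Therefore $\sum_k e_k\lesssim D_1(f)$ and $\sum_k m_k\lesssim M$, while $m_k\le M$ for every $k$. This gives
\[
\sum_k m_k^{5/3}\le M^{2/3}\sum_k m_k\lesssim\|f\|_2^{10/3},\qquad \sum_k m_k^{3/2}\le M^{1/2}\sum_k m_k\lesssim\|f\|_2^3.
\]
The infinitely many shells with $k\to-\infty$ cause no problem, because we only sum $\ell^1$-controlled quantities and never use geometric decay there. Summing over $k$, and renaming $\varepsilon$ to absorb the overlap constant, yields \eqref{eq:weighted-main} with $R=1$.

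\emph{Main obstacle.} The delicate point is the exponent bookkeeping in Step~2. One must check that near the origin the weight $2^{2k}$ exactly cancels the factor $2^{-3k/2}$ together with the factor $2^{-k/2}$ hidden in $d_k^{1/4}=(2^{-2k}e_k)^{1/4}$, so that no $k$-dependent loss survives. One must also check that the uniformity of $C(I,I^*)$ in Lemma~\ref{lem:1d} is not destroyed by the rescaling. I would first verify these powers carefully; if a stray power of $2^k$ appeared, I would enlarge or shrink $I^*$, or switch to Strauss-type pointwise radial bounds on the shells.
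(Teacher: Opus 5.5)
Your proposal is correct and is essentially the paper's argument. It uses dyadic annuli with enlarged shells and rescales each shell to the fixed pair $[1,2]\Subset[1/2,4]$, so that Lemma~\ref{lem:1d} applies with a uniform constant. It then compares $\omega\approx\min\{r^2/R^2,1\}$ on each enlarged shell, applies Young's inequality with exponents $4$ and $4/3$, and sums using bounded overlap and $\sum_k m_k^\alpha\lesssim M^\alpha$. Your bookkeeping $\int_{A_k}|f|^3\lesssim 2^{-3k/2}d_k^{1/4}m_k^{5/4}+2^{-2k}m_k^{3/2}$ is right.

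The differences are cosmetic. You normalize to $R=1$ by the $L^2$-critical scaling at the outset, whereas the paper keeps $R$ and uses $d_j\rho_j^{-2}\le R^{-2}$. You split into $k\ge0$ and $k<0$ rather than using one weighted Young inequality. You work directly with the radial profile of an $H^1$ function, while the paper uses a density argument.
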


\begin{proof}
We first assume $f\in C_c^\infty(\R^4)$ is radial.  Fix $R>0$ and set
\[
 \rho_j=2^jR, \qquad A_j=\{\rho_j<|x|<2\rho_j\},
 \qquad A_j^*=\{\rho_j/2<|x|<4\rho_j\}, \qquad j\in\mathbb Z.
\]
Let
\begin{equation*}
 d_j:=\min\left\{\left(\frac{\rho_j}{R}\right)^2,1\right\}.
\end{equation*}
By \eqref{eq:omega-basic},
\begin{equation}\label{eq:omega-annulus}
 c d_j\le \omega_R(x)\le C d_j,
 \qquad x\in A_j^*,
\end{equation}
with constants independent of $j$ and $R$.

Write $f=f(r)$ and introduce on the fixed interval $(1/2,4)$
\begin{equation}\label{eq:Uj}
 U_j(s)=\rho_j^2f(\rho_js).
\end{equation}
Set
\begin{align*}
 \mu_j&:=\int_{1/2}^4|U_j(s)|^2s^3\dds,\\
 Q_j&:=\int_{1/2}^4|U_j'(s)|^2s^3\dds.
\end{align*}
Since $s^3\simeq1$ on $(1/2,4)$, Lemma \ref{lem:1d} yields
\begin{equation}\label{eq:fixed-cubic}
 \int_1^2|U_j|^3s^3\dds
 \le C Q_j^{1/4}\mu_j^{5/4}+C\mu_j^{3/2}.
\end{equation}
The scaling in \eqref{eq:Uj} is precisely the $L^2$-critical scaling in four dimensions.  Direct changes of variables give
\begin{align}
 \int_{A_j^*}|f|^2\dd&=|\mathbb S^3|\mu_j,\nonumber\\
 \int_{A_j^*}|\nabla f|^2\dd&=|\mathbb S^3|\rho_j^{-2}Q_j,\label{eq:scale-grad}\\
 \int_{A_j}|f|^3\dd&=|\mathbb S^3|\rho_j^{-2}\int_1^2|U_j|^3s^3\dds.\nonumber
\end{align}
Define the local weighted kinetic defect
\begin{equation*}
 D_j:=\int_{A_j^*}\omega_R|\nabla f|^2\dd.
\end{equation*}
By \eqref{eq:omega-annulus} and \eqref{eq:scale-grad},
\begin{equation}\label{eq:Dj-lower}
 D_j\ge cd_j\rho_j^{-2}Q_j.
\end{equation}
Using \eqref{eq:omega-annulus}, \eqref{eq:fixed-cubic}, and \eqref{eq:Dj-lower}, we obtain
\begin{align}
 \int_{A_j}\omega_R|f|^3\dd
 &\le C d_j\rho_j^{-2} \left(Q_j^{1/4}\mu_j^{5/4}+\mu_j^{3/2}\right)\notag\\
 &\le C d_j^{3/4}\rho_j^{-3/2}D_j^{1/4}\mu_j^{5/4} +C d_j\rho_j^{-2}\mu_j^{3/2}.
 \label{eq:annular-before-young}
\end{align}
Young's inequality with exponents $4$ and $4/3$ implies that for every $\delta>0$,
\begin{equation}\label{eq:young-annulus}
 d_j^{3/4}\rho_j^{-3/2}D_j^{1/4}\mu_j^{5/4} \le \delta D_j+C_\delta d_j\rho_j^{-2}\mu_j^{5/3}.
\end{equation}
The coefficient has the uniform critical-scale bound
\begin{equation}\label{eq:critical-coefficient}
 d_j\rho_j^{-2}\le R^{-2}.
\end{equation}
Indeed, when $\rho_j\le R$ the left side equals $R^{-2}$, while when $\rho_j\ge R$ it equals $\rho_j^{-2}\le R^{-2}$.

Combining \eqref{eq:annular-before-young}--\eqref{eq:critical-coefficient},
\begin{equation}\label{eq:annular-final}
 \int_{A_j}\omega_R|f|^3\dd \le \delta D_j+C_\delta R^{-2} \left(\mu_j^{5/3}+\mu_j^{3/2}\right).
\end{equation}
The enlarged annuli $A_j^*$ have uniformly bounded overlap, so
\begin{equation}\label{eq:overlap}
 \sum_jD_j\le C D_R(f), \qquad \sum_j\mu_j\le C\|f\|_2^2.
\end{equation}
For every $\alpha\ge1$,
\[
 \sum_j\mu_j^\alpha\le\left(\sum_j\mu_j\right)^\alpha.
\]
Summing \eqref{eq:annular-final}, using \eqref{eq:overlap}, and choosing $\delta$ so that the bounded-overlap constant gives the prescribed $\varepsilon$ proves \eqref{eq:weighted-main} for smooth compactly supported radial functions.

Finally, $C_{c,\mathrm{rad}}^\infty(\R^4)$ is dense in $H^1_{\mathrm{rad}}(\R^4)$.  If $f_n\to f$ in $H^1$, then $f_n\to f$ in $L^3$ by interpolation between $L^2$ and the Sobolev embedding $H^1(\R^4)\hookrightarrow L^4(\R^4)$, while $\omega_R^{1/2}\nabla f_n\to\omega_R^{1/2}\nabla f$ in $L^2$.  Passing to the limit proves the proposition for every radial $H^1$ function.
\end{proof}

\begin{corollary}[Mixed cubic defect]\label{cor:mixed}
Let $u,v\in H^1_{\mathrm{rad}}(\R^4)$ and set
\begin{equation*}
 \mathcal D_R(u,v):=\int_{\R^4}\omega_R \big(|\nabla u|^2+\kappa|\nabla v|^2\big)\dd.
\end{equation*}
For every $\varepsilon>0$,
\begin{equation*}
 \int_{\R^4}\omega_R|v||u|^2\dd \le \varepsilon\mathcal D_R(u,v) +C_{\kappa,\varepsilon}R^{-2} \big(m^{5/3}+m^{3/2}\big),
\end{equation*}
where $m=M(u,v)$.
\end{corollary}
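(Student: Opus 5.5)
The plan is to reduce the mixed cubic term to the single-function estimate of Proposition~\ref{prop:weighted} by a pointwise Young inequality. Since $|v||u|^2\le \frac13|v|^3+\frac23|u|^3$ pointwise, we get
\[
 \int_{\R^4}\omega_R|v||u|^2\dd\le \frac13\int_{\R^4}\omega_R|v|^3\dd+\frac23\int_{\R^4}\omega_R|u|^3\dd .
\]
Both $u$ and $v$ are radial $H^1$ functions, so Proposition~\ref{prop:weighted} applies to each term separately, with parameters $\varepsilon_u,\varepsilon_v>0$ still to be chosen.

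For the $u$-term we take $\varepsilon_u=\frac32\varepsilon$. This gives $\frac23\int\omega_R|u|^3\le \varepsilon D_R(u)+C_\varepsilon R^{-2}(\|u\|_2^{10/3}+\|u\|_2^3)$.

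For the $v$-term we must produce the coefficient $\kappa$ in front of $D_R(v)$. We therefore take $\varepsilon_v=3\kappa\varepsilon$, which gives $\frac13\int\omega_R|v|^3\le \varepsilon\kappa D_R(v)+C_{\kappa,\varepsilon}R^{-2}(\|v\|_2^{10/3}+\|v\|_2^3)$.

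Adding the two bounds, the kinetic parts combine exactly into $\varepsilon\big(D_R(u)+\kappa D_R(v)\big)=\varepsilon\mathcal D_R(u,v)$.

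For the remainder we use $\|u\|_2^2\le m$ and $\|v\|_2^2\le m/2\le m$. These give $\|u\|_2^{10/3},\|v\|_2^{10/3}\le m^{5/3}$ and $\|u\|_2^3,\|v\|_2^3\le m^{3/2}$. So the total error is bounded by $C_{\kappa,\varepsilon}R^{-2}(m^{5/3}+m^{3/2})$, as claimed.

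No step is a genuine obstacle. The only point needing care is bookkeeping: the $\kappa$-dependence of the constant enters solely through the choice $\varepsilon_v=3\kappa\varepsilon$, and the constant in Proposition~\ref{prop:weighted} is independent of $f$ and $R$, so the resulting constant is uniform in $R$.
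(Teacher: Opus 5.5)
Your proposal is correct and follows essentially the same route as the paper. The paper also uses the pointwise Young inequality $|v||u|^2\le\frac23|u|^3+\frac13|v|^3$, applies Proposition~\ref{prop:weighted} to $u$ and $v$ with $\kappa$-adjusted parameters, and bounds the remainder using $\|u\|_2^2\le m$ and $\|v\|_2^2\le m/2$. Your explicit choices $\varepsilon_u=\frac32\varepsilon$ and $\varepsilon_v=3\kappa\varepsilon$ simply make that bookkeeping precise.
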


\begin{proof}
The pointwise Young inequality
\[
 |v||u|^2\le \frac23|u|^3+\frac13|v|^3
\]
reduces the assertion to Proposition \ref{prop:weighted}.  Apply that proposition to $u$ with a sufficiently small parameter and to $v$ with a parameter smaller by a factor depending on $\kappa$, so that both gradient terms are absorbed into $\varepsilon\mathcal D_R$.  Finally,
\[
 \|u\|_2^2\le m, \qquad \|v\|_2^2\le m/2.
\]
\end{proof}

\section{Geometry of the exponential quadratic multiplier}\label{sec:geometry}

For $R>0$, define
\begin{equation*}
 \phi_R(x)=R^2\left(1-e^{-(|x|/R)^2}\right).
\end{equation*}
Writing $s=|x|/R$ and $h(s)=e^{-s^2}$,
\begin{equation*}
 \phi_R'(r)=2rh(s), \qquad \phi_R''(r)=2h(s)(1-2s^2).
\end{equation*}
For a radial function $\phi(r)$, its Hessian matrix is
\[
D^2\phi = \phi'' e_r \otimes e_r + \frac{\phi'}{r} (I-e_r\otimes e_r), \quad e_r = \frac{x}{r}.
\]
Thus, the radial and tangential eigenvalues of $D^2\phi_R$ are
\begin{equation*}
 \lambda_{\mathrm{rad}}=2h(s)(1-2s^2), \qquad \lambda_{\mathrm{tan}}=2h(s).
\end{equation*}

\begin{lemma}[Defect geometry]\label{lem:geometry}
Let $\omega_R=1-h(|x|/R)$.  Then, as quadratic forms,
\begin{equation}\label{eq:hessian-defect}
 2I-D^2\phi_R\ge 2\omega_R I.
\end{equation}
Moreover, in four dimensions,
\begin{equation}\label{eq:theta-bound}
 0\le 8-\Delta \phi_R \le 12\omega_R.
\end{equation}
Finally,
\begin{equation}\label{eq:linear-derivative}
 \|\Delta^2\phi_R\|_{L^\infty}\le CR^{-2},
\end{equation}
and
\begin{equation}\label{eq:virial-geometry}
 |\nabla\phi_R(x)|^2\le4R^2\omega_R(x).
\end{equation}
\end{lemma}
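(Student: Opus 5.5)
The plan is to verify each of the four claims by direct computation in the variable $s=|x|/R$, using the explicit radial and tangential eigenvalues of $D^2\phi_R$ recorded above together with the elementary comparison \eqref{eq:omega-basic}.

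\emph{The Hessian bound \eqref{eq:hessian-defect}.} Since $2I-D^2\phi_R$ is diagonal in the frame $\{e_r\}\cup e_r^\perp$, it suffices to check the two eigenvalues separately.
\begin{itemize}
\item In the tangential directions the eigenvalue of $2I-D^2\phi_R$ is $2-2h=2\omega_R$, so equality holds.
\item In the radial direction it is $2-2h(1-2s^2)=2\omega_R+4s^2h\ge 2\omega_R$.
\end{itemize}

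\emph{The Laplacian bound \eqref{eq:theta-bound}.} In four dimensions the trace gives
\[
\Delta\phi_R=\lambda_{\mathrm{rad}}+3\lambda_{\mathrm{tan}}=2h(1-2s^2)+6h=8h-4s^2h .
\]
Hence
\[
8-\Delta\phi_R=8(1-h)+4s^2h=8\omega_R+4s^2e^{-s^2},
\]
which is clearly nonnegative. For the upper bound it remains to show $s^2e^{-s^2}\le 1-e^{-s^2}$, i.e.\ $1+s^2\le e^{s^2}$, which is the elementary inequality $1+t\le e^t$. This gives $8-\Delta\phi_R\le 12\omega_R$.

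\emph{The bilaplacian bound \eqref{eq:linear-derivative}.} By scaling, $\phi_R(x)=R^2\Phi(x/R)$ with $\Phi(y)=1-e^{-|y|^2}$. Then $\Delta^2\phi_R(x)=R^{-2}(\Delta^2\Phi)(x/R)$. Since $\Phi$ is smooth and all its derivatives are Gaussian times polynomials, $\Delta^2\Phi$ is bounded, and the estimate follows.

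\emph{The gradient bound \eqref{eq:virial-geometry}.} We have
\[
|\nabla\phi_R|^2=4|x|^2e^{-2s^2}=4R^2s^2e^{-2s^2}.
\]
So it suffices to show $s^2e^{-2s^2}\le 1-e^{-s^2}$. This follows from
\[
s^2e^{-2s^2}\le s^2e^{-s^2}\le 1-e^{-s^2},
\]
where the last step is again $1+s^2\le e^{s^2}$.

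None of these steps is a genuine obstacle. The only point requiring care is the upper bound in \eqref{eq:theta-bound}: there the defect appears with the extra term $4s^2h$, and one must check it is dominated by $\omega_R$ uniformly in $s$, including for small $s$, where both sides are of order $s^2$. The inequality $1+t\le e^t$ handles all $s$ at once.
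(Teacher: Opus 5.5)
Your proposal is correct and follows the paper's proof essentially step by step: the same eigenvalue check for $2I-D^2\phi_R$, the identity $8-\Delta\phi_R=8\omega_R+4s^2e^{-s^2}$ combined with $e^{s^2}\ge 1+s^2$, the scaling $\phi_R(x)=R^2\phi_1(x/R)$ for the bilaplacian bound, and the chain $s^2e^{-2s^2}\le s^2e^{-s^2}\le 1-e^{-s^2}$ for the gradient bound. The comparison \eqref{eq:omega-basic} that you mention at the outset is never actually used, and it is not needed.
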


\begin{proof}
The tangential eigenvalue of $2I-D^2\phi_R$ is $2(1-h)=2\omega_R$, while the radial eigenvalue is
\[
 2-2h(1-2s^2)=2\omega_R+4s^2h\ge2\omega_R,
\]
which proves \eqref{eq:hessian-defect}.  Since $d=4$,
\[
 \Delta\phi_R=\phi_R''+\frac3r\phi_R'=8h-4s^2h.
\]
Thus
\[
8-\Delta \phi_R=8(1-h)+4s^2h=8\omega_R+4s^2h.
\]
The elementary inequality
\begin{equation*}
 s^2e^{-s^2}\le1-e^{-s^2}
\end{equation*}
follows from $e^{s^2}\ge1+s^2$, and gives $0\le 8-\Delta \phi_R \le12\omega_R$, which proves \eqref{eq:theta-bound}.

The scaling $\phi_R(x)=R^2\phi_1(x/R)$ and smoothness of $\phi_1(y)=1-e^{-|y|^2}$ give \eqref{eq:linear-derivative}.  Finally,
\[
 |\nabla\phi_R|^2=4R^2s^2e^{-2s^2} \le4R^2s^2e^{-s^2} \le4R^2\omega_R,
\]
which is \eqref{eq:virial-geometry}.
\end{proof}

\begin{remark}\label{rem:why-exp}
The structural relation \eqref{eq:virial-geometry} is the decisive feature of the exponential multiplier.  A compactly flattened multiplier which equals $|x|^2$ on a ball has zero kinetic defect $2I-D^2\phi_R$ throughout that ball, while $|\nabla\phi_R|$ is nonzero there.  Consequently the localized virial cannot be controlled solely by the same defect that appears with a favorable sign in its derivative.  The exponential profile avoids this mismatch: both quantities vanish at the origin at compatible quadratic order and are linked globally by \eqref{eq:virial-geometry}.
\end{remark}

\section{Virial-defect inequality}\label{sec:coercive}
For a real-valued sufficiently smooth function $\phi$ with bounded derivatives, define
\begin{equation}\label{eq:Mphi}
 \mathcal M_\phi(u,v) :=2\im\int_{\R^4}\nabla\phi(x)\cdot
 \big(\nabla u\overline u+\nabla v\overline v\big)\dd.
\end{equation}
The following identity is standard for \eqref{eq:system}; see Dinh--Forcella~\cite[Section 4]{DF}.

\begin{lemma}[Localized virial identity]\label{lem:virial-id}
Let $(u,v)$ be a $H^1\times H^1$ solution of \eqref{eq:system} defined on the maximal time interval $(-T_-,T_+)$. Then for all $t\in (-T_-, T_+)$,
\begin{align}
 \frac{d}{dt}\mathcal M_\phi(u,v)
 ={}&-\int_{\R^4}\Delta^2\phi\big(|u|^2+\kappa|v|^2\big)\dd\label{eq:virial-id}\\
 &+4\re\sum_{j,k=1}^4\int_{\R^4}\phi_{jk}
 \big(\partial_ju \partial_k\overline u + \kappa \partial_jv \partial_k\overline v\big)\dd\notag\\
 &-2\re\int_{\R^4}\Delta \phi v\overline u^{2}\dd.\notag
\end{align}
In particular, for $\phi(x)=|x|^2$ one has
\[
 \frac{d}{dt}\mathcal M_{|x|^2}(u,v)=16E(u,v).
\]
\end{lemma}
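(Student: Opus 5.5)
Since this is the standard localized virial identity, I will derive it by direct computation: first for smooth, rapidly decaying solutions, then pass to $H^1\times H^1$ solutions by approximation. For the smooth case I will write $\mathcal M_\phi = 2\im\int\nabla\phi\cdot(\nabla u\,\overline u+\nabla v\,\overline v)$ and differentiate in time, using $\partial_t u = i\Delta u + 2i v\overline u$ and $\partial_t v = i\kappa\Delta v + i u^2$. The linear (dispersive) parts then give the classical Morawetz-type computation for each component separately. For $u$, after integrating by parts twice, I expect
\[
4\re\int\phi_{jk}\partial_ju\partial_k\overline u - \int\Delta^2\phi|u|^2.
\]
For $v$ the same computation is scaled by the dispersion coefficient $\kappa$. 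This explains the $\kappa$ weights on both the Hessian term and the $\Delta^2\phi$ term.

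The main step is the nonlinear contribution. I will use the identity $\frac{d}{dt}\,2\im\int\nabla\phi\cdot\nabla w\,\overline w = -2\re\int \left(2\nabla\phi\cdot\nabla\overline w+\Delta\phi\,\overline w\right)(i\partial_t w)$ restricted to the nonlinear terms. This is obtained from the symmetric form $2\im\int \nabla\phi\cdot\nabla w\overline w = -\im\int(2\nabla\phi\cdot\nabla\overline w+\Delta\phi\overline w)w$ after one integration by parts. Inserting $N_u=-2v\overline u$ and $N_v=-u^2$, the terms to combine are
\[
2\re\int(2\nabla\phi\cdot\nabla\overline u+\Delta\phi\,\overline u)\,2v\overline u
\quad\text{and}\quad
2\re\int(2\nabla\phi\cdot\nabla\overline v+\Delta\phi\,\overline v)\,u^2,
\]
with appropriate signs. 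Their gradient parts combine into $2\re\int\nabla\phi\cdot\nabla(\overline u^2 v)$ up to conjugation. This is exactly why the mass is weighted $|u|^2+2|v|^2$ but the virial functional is unweighted in the coefficients: the coupling $-2v\overline u$ versus $-u^2$ makes the derivative of the real part of $v\overline u^2$ appear as a total derivative. Integrating by parts turns it into $-2\re\int\Delta\phi\,v\overline u^2$. The remaining $\Delta\phi$ terms partially cancel, and the net result should be the coefficient $-2$ in \eqref{eq:virial-id}. Careful bookkeeping of the factors $2$ and the signs here is the main obstacle, so I will check the coefficient against the case $\phi=|x|^2$.

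That check goes as follows. For $\phi=|x|^2$ in $d=4$ we have $\phi_{jk}=2\delta_{jk}$, $\Delta\phi=8$ and $\Delta^2\phi=0$. The formula then gives $8T-16P=16E$, consistent with mass-critical scaling, which confirms the coefficients.

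Finally, for general $H^1$ solutions I will regularize the data, using $H^2$ (or Schwartz) approximations with the continuous dependence from \cite{HOT}. The identity is then established in integrated form on compact subintervals of $(-T_-,T_+)$. Every term is continuous in $H^1\times H^1$: $\nabla\phi$ and $D^2\phi$ are bounded and $\Delta^2\phi\in L^\infty$, and the cubic term is controlled by $H^1(\R^4)\hookrightarrow L^3$. Passing to the limit then gives the identity, with the time derivative understood classically because the right-hand side is continuous in $t$.
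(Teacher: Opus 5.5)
Your overall plan is the standard argument behind the Dinh--Forcella citation that the paper gives instead of a proof: compute $\frac{d}{dt}\mathcal M_\phi$ for $H^2$ solutions, then pass to $H^1\times H^1$ via continuous dependence and the time-integrated identity. The approximation step is fine. The nonlinear bookkeeping, which you correctly identify as the crux, is wrong as written.

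\textbf{Sign of the auxiliary identity.} Your auxiliary identity has the wrong overall sign. Differentiating the symmetric form gives
\[
\frac{d}{dt}\,2\im\int\nabla\phi\cdot\nabla w\,\overline w
=-2\im\int\big(2\nabla\phi\cdot\nabla\overline w+\Delta\phi\,\overline w\big)\partial_t w
=+2\re\int\big(2\nabla\phi\cdot\nabla\overline w+\Delta\phi\,\overline w\big)(i\partial_t w).
\]
To test it, take $i\partial_t w=-\Delta w$ and $\phi=|x|^2$: the plus sign gives $+8\|\nabla w\|_2^2$, while yours gives $-8\|\nabla w\|_2^2$. Used literally, your version negates the entire right-hand side; in particular it produces $+2\re\int\Delta\phi\,v\overline u^2$.

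\textbf{Coefficient of the gradient terms.} The gradient pieces combine with coefficient $4$, not $2$. Your description (integration by parts gives $-2$, and the remaining $\Delta\phi$ terms ``partially cancel'' to a net $-2$) is not arithmetically consistent.

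\textbf{Correct count.} Insert the nonlinear parts $-2v\overline u$ of $i\partial_t u$ and $-u^2$ of $i\partial_t v$.
\begin{itemize}
\item The $u$-equation contributes $-4\re\int\nabla\phi\cdot\nabla(\overline u^2)\,v-4\re\int\Delta\phi\,v\overline u^2$.
\item The $v$-equation contributes, after conjugation, $-4\re\int\nabla\phi\cdot\nabla v\,\overline u^2-2\re\int\Delta\phi\,v\overline u^2$.
\item The gradient terms sum to $-4\re\int\nabla\phi\cdot\nabla(v\overline u^2)=+4\re\int\Delta\phi\,v\overline u^2$.
\end{itemize}
That $+4$ cancels exactly the $-4$ from the $u$-equation, leaving $-2\re\int\Delta\phi\,v\overline u^2$.

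\textbf{Your $\phi=|x|^2$ check.} As phrased, it only confirms that the \emph{target} formula gives $8T-16P=16E$. It catches an error only if you compare your own derived coefficients against it. The overall sign is fixed by the linear term. The nonlinear coefficient is fixed by the requirement that the quadratic virial be a multiple of $E$ in the mass-critical case.

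\textbf{The ``in particular'' clause.} Your approximation argument does not cover it, because $\nabla|x|^2$ is unbounded and $\mathcal M_{|x|^2}$ is defined only for finite-variance data. Either read the clause formally (the paper only uses the algebraic identity $16E=8T-16P$), or assume $xu_0,xv_0\in L^2$ and add a truncation argument.
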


Set
\begin{equation*}
 \mathcal M_R(t):=\mathcal M_{\phi_R}(u(t),v(t)).
\end{equation*}
The boundedness of $\nabla\phi_R$ implies that $\mathcal M_R(t)$ is finite for every $H^1\times H^1$ state.

\begin{proposition}[Virial-defect inequality]\label{prop:virial}
For every $(u,v)\in H^1(\R^4)\times H^1(\R^4)$,
\begin{equation}\label{eq:virial}
 |\mathcal M_R(u,v)| \le C_\kappa RM(u,v)^{1/2}\mathcal D_R(u,v)^{1/2}.
\end{equation}
\end{proposition}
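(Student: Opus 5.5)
The plan is to bound each component of $\mathcal M_R$ pointwise by Cauchy--Schwarz, and then let the geometric relation \eqref{eq:virial-geometry} from Lemma~\ref{lem:geometry} convert the weight $|\nabla\phi_R|^2$ into the defect weight $\omega_R$. From the definition \eqref{eq:Mphi} with $\phi=\phi_R$, we first bound
\[
 |\mathcal M_R(u,v)|\le 2\int_{\R^4}|\nabla\phi_R|\,|\nabla u|\,|u|\,\dd+2\int_{\R^4}|\nabla\phi_R|\,|\nabla v|\,|v|\,\dd .
\]
No radial symmetry or regularity beyond $H^1$ is needed here: the integrands are integrable because $\nabla\phi_R$ is bounded.

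For the $u$-term, Cauchy--Schwarz puts the full weight on the gradient factor:
\[
 \int|\nabla\phi_R||\nabla u||u|\,\dd\le\Big(\int|\nabla\phi_R|^2|\nabla u|^2\,\dd\Big)^{1/2}\|u\|_{L^2}.
\]
Applying \eqref{eq:virial-geometry}, $|\nabla\phi_R|^2\le 4R^2\omega_R$, the first factor is at most $2R\big(\int\omega_R|\nabla u|^2\big)^{1/2}$. The $v$-term is handled identically. The $\kappa$ inside $\mathcal D_R$ is dealt with by writing
\[
 \int\omega_R|\nabla v|^2\le\kappa^{-1}\mathcal D_R(u,v), \qquad \int\omega_R|\nabla u|^2\le \mathcal D_R(u,v).
\]

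Combining these, and using $\|u\|_2\le M^{1/2}$ and $\|v\|_2\le (M/2)^{1/2}$, gives
\[
 |\mathcal M_R|\le 4R\big(1+(2\kappa)^{-1/2}\big)M(u,v)^{1/2}\mathcal D_R(u,v)^{1/2},
\]
which is \eqref{eq:virial} with $C_\kappa=4(1+(2\kappa)^{-1/2})$.

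There is no genuine obstacle. The only point that requires care is allocating the weight correctly in the Cauchy--Schwarz step: all of $|\nabla\phi_R|$ must go on the gradient factor, so that the defect $\omega_R$ appears, and none of it may go on the $L^2$ factor. Splitting the weight any other way would fail, because $\omega_R$ is not bounded below near the origin. The relation \eqref{eq:virial-geometry} is exactly the input that makes this allocation possible.
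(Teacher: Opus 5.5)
Your proposal is correct and is essentially the paper's proof: Cauchy--Schwarz with the full weight $|\nabla\phi_R|$ on the gradient factor, the pointwise bound \eqref{eq:virial-geometry}, and the comparisons $\|v\|_2^2\le M/2$ and $\int\omega_R|\nabla v|^2\le\kappa^{-1}\mathcal D_R$. Your explicit constant $C_\kappa=4\bigl(1+(2\kappa)^{-1/2}\bigr)$ is exactly what the paper's chain of inequalities yields.
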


\begin{proof}
Using \eqref{eq:virial-geometry} and Cauchy--Schwarz,
\begin{align*}
 |\mathcal M_R(u,v)|
 &\le2\int|\nabla\phi_R|
 \big(|u||\nabla u|+|v||\nabla v|\big)\dd\\
 &\le4R\|u\|_2\left(\int\omega_R|\nabla u|^2\right)^{1/2}
 +4R\|v\|_2\left(\int\omega_R|\nabla v|^2\right)^{1/2}.
\end{align*}
Since
\[
 \|u\|_2^2\le M(u,v), \qquad \|v\|_2^2\le\frac12M(u,v),
\]
and
\[
 \int\omega_R|\nabla u|^2\le\mathcal D_R(u,v),
 \qquad
 \int\omega_R|\nabla v|^2\le\kappa^{-1}\mathcal D_R(u,v),
\]
we obtain \eqref{eq:virial}.
\end{proof}

\begin{proposition}[Coercive localized virial inequality]\label{prop:coercive}
Let $(u,v)$ be a radial $H^1\times H^1$ solution of \eqref{eq:system} defined on the maximal time interval $(-T_-,T_+)$. There exist constants $c_\kappa,C_\kappa>0$ such that for every $R>0$ and every $t\in (-T_-,T_+)$,
\begin{equation}\label{eq:coercive}
 \mathcal M_R'(t)
 \le16E(u_0,v_0)-c_\kappa\mathcal D_R(u(t),v(t)) +C_\kappa R^{-2}G(m),
\end{equation}
where
\begin{equation*}
 G(m):=m+m^{3/2}+m^{5/3},
\end{equation*}
with $m:=M(u_0,v_0)=M(u(t),v(t))$.
\end{proposition}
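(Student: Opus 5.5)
The plan is to start from the localized virial identity of Lemma~\ref{lem:virial-id} with $\phi=\phi_R$ and write it as $16E$ plus defect corrections. Since $E(u,v)=\frac12T-P$ is conserved, we have $16E(u_0,v_0)=8T(u,v)-16P(u,v)$. Hence
\[
\mathcal M_R'(t)-16E(u_0,v_0) = -\int\Delta^2\phi_R(|u|^2+\kappa|v|^2)\dd - \Big[8T-4\re\sum_{j,k}\int \partial_{jk}\phi_R(\partial_ju\partial_k\overline u+\kappa\partial_jv\partial_k\overline v)\dd\Big] + 2\re\int(8-\Delta\phi_R)v\overline u^2\dd .
\]
Each of the three pieces on the right will be estimated separately.

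The first term is linear. By \eqref{eq:linear-derivative} it is bounded by $CR^{-2}(\|u\|_2^2+\kappa\|v\|_2^2)\le C_\kappa R^{-2}m$, which accounts for the $m$ in $G(m)$.

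The bracketed kinetic term equals
\[
4\re\sum_{j,k}\int (2\delta_{jk}-\partial_{jk}\phi_R)(\partial_ju\partial_k\overline u+\kappa\,\partial_jv\partial_k\overline v)\dd .
\]
Writing the complex gradient as real and imaginary parts, the Hessian lower bound \eqref{eq:hessian-defect} applied pointwise shows this is at least $8\int\omega_R(|\nabla u|^2+\kappa|\nabla v|^2)\dd=8\mathcal D_R$. So it contributes $-8\mathcal D_R$ with the favorable sign.

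For the nonlinear term, \eqref{eq:theta-bound} gives $0\le 8-\Delta\phi_R\le 12\omega_R$, so the term is bounded by $24\int\omega_R|v||u|^2\dd$. Corollary~\ref{cor:mixed}, which is where radiality enters, bounds this by
\[
24\varepsilon\mathcal D_R+C_{\kappa,\varepsilon}R^{-2}(m^{5/3}+m^{3/2}).
\]
Choosing $\varepsilon=1/6$ absorbs the defect into $-8\mathcal D_R$ and leaves $-4\mathcal D_R$. This yields \eqref{eq:coercive} with $c_\kappa=4$ and $C_\kappa$ collecting the constants. Since $\mathcal D_R$ carries $\kappa$ inside and all bounds are uniform in $R$, the constants are independent of $R$ and $t$.

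The main obstacle is the justification, not the algebra. Lemma~\ref{lem:virial-id} is stated for $H^1$ solutions, and $\phi_R$ has bounded derivatives of every order, so the identity is valid as quoted from \cite{DF}. If needed, one regularizes the data, applies the identity, and passes to the limit using continuous dependence in $H^1$. Every term above is continuous in $H^1$: the weighted gradient terms have bounded weights, and the cubic terms are continuous since $H^1\subset L^3$ in dimension four. The only remaining care is the pointwise quadratic-form inequality for complex vectors, which reduces to the real case by splitting $\nabla u=a+ib$.
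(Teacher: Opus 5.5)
Your proposal is correct and follows the paper's proof essentially line for line. It uses the same add-and-subtract decomposition against the exact quadratic virial $16E=8T-16P$, the Hessian defect bound \eqref{eq:hessian-defect} giving $-8\mathcal D_R$, Corollary~\ref{cor:mixed} with $24\varepsilon=4$ (the paper's choice $\eta=4$), and \eqref{eq:linear-derivative} for the $\Delta^2\phi_R$ term, yielding $c_\kappa=4$; your remarks on the complex quadratic form and on regularization fill in details the paper leaves implicit.
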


\begin{proof}
Starting from \eqref{eq:virial-id}, add and subtract the exact quadratic-virial terms.  Since $d=4$ and $16E=8T-16P$, we obtain the exact decomposition
\begin{align}
 \mathcal M_R'(t)
 ={}&16E(u_0,v_0)\label{eq:exact-decomp}\\
 &-4\re\sum_{j,k=1}^4\int_{\R^4}
 (2\delta_{jk}-(\phi_R)_{jk})
 \big(\partial_ju\partial_k\overline u+\kappa\partial_jv\partial_k\overline v\big)\dd\notag\\
 &+2\re\int_{\R^4}(8-\Delta\phi_R)v\overline u^{2}\dd\notag\\
 &-\int_{\R^4}\Delta^2\phi_R\big(|u|^2+\kappa|v|^2\big)\dd.\notag
\end{align}
By \eqref{eq:hessian-defect}, the principal term satisfies
\begin{equation}\label{eq:principal}
 -4\re\sum_{j,k=1}^4\int_{\R^4}
 (2\delta_{jk}-(\phi_R)_{jk})
 \big(\partial_ju\partial_k\overline u+\kappa\partial_jv\partial_k\overline v\big)\dd
 \le -8\mathcal D_R(u,v).
\end{equation}
For the nonlinear defect, \eqref{eq:theta-bound} and Corollary \ref{cor:mixed} give, for any $\eta>0$,
\begin{align}
 \left|2\re\int(8-\Delta\phi_R)v\overline u^{2}\dd\right|
 &\le24\int\omega_R|v||u|^2\dd\notag\\
 &\le \eta\mathcal D_R(u,v)
 +C_{\kappa,\eta}R^{-2}\big(m^{5/3}+m^{3/2}\big).
 \label{eq:nl-defect}
\end{align}
By \eqref{eq:linear-derivative} and mass conservation,
\begin{equation}\label{eq:linear-defect}
 \left|\int\Delta^2\phi_R(|u|^2+\kappa|v|^2)\dd\right|
 \le C_\kappa R^{-2}m.
\end{equation}
Choose, for example, $\eta=4$ in \eqref{eq:nl-defect}.  Combining \eqref{eq:exact-decomp}--\eqref{eq:linear-defect} gives \eqref{eq:coercive}, with $c_\kappa$ replaceable by an absolute positive constant if $\kappa$ is regarded as fixed in the definition of $\mathcal D_R$.
\end{proof}

\section{Proof of the main theorem}\label{sec:riccati}

We now complete the argument.

\begin{proof}[Proof of Theorem \ref{thm:main}]
Write
\begin{equation*}
 E(u_0,v_0)=-e, \qquad e>0.
\end{equation*}
Fix $R>0$ sufficiently large that
\begin{equation*}
 C_\kappa R^{-2}G(m)\le8e,
\end{equation*}
where $C_\kappa$ is the constant in Proposition \ref{prop:coercive}.  Then \eqref{eq:coercive} yields
\begin{equation}\label{eq:MR-negative}
 \mathcal M_R'(t)\le-8e-c_\kappa\mathcal D_R(t),
\end{equation}
where $\mathcal D_R(t)=\mathcal D_R(u(t),v(t))$.

Assume for contradiction that $T_+=\infty$.  Integrating the constant negative term in \eqref{eq:MR-negative},
\begin{equation*}
 \mathcal M_R(t)\le\mathcal M_R(0)-8et.
\end{equation*}
Hence there exists $t_0>0$ such that
\[
 \mathcal M_R(t)<0, \qquad t\ge t_0.
\]
Define
\begin{equation*}
 y(t):=-\mathcal M_R(t)>0,
 \qquad t\ge t_0.
\end{equation*}
By \eqref{eq:MR-negative},
\begin{equation}\label{eq:yprime-defect}
 y'(t)\ge c_\kappa\mathcal D_R(t).
\end{equation}
On the other hand, Proposition \ref{prop:virial} and mass conservation give
\[
 y(t)\le C_\kappa Rm^{1/2}\mathcal D_R(t)^{1/2}.
\]
Since $m>0$, this implies
\begin{equation}\label{eq:defect-lower-y}
 \mathcal D_R(t)\ge c_\kappa R^{-2}m^{-1}y(t)^2.
\end{equation}
Combining \eqref{eq:yprime-defect} and \eqref{eq:defect-lower-y},
\begin{equation*}
 y'(t)\ge K y(t)^2, \qquad K=c_\kappa R^{-2}m^{-1}>0,
 \qquad t\ge t_0.
\end{equation*}
Therefore
\begin{equation*}
 \frac{d}{dt}\frac1{y(t)}=-\frac{y'(t)}{y(t)^2}\le-K.
\end{equation*}
Integrating,
\begin{equation}\label{eq:inverse-y-integrated}
 \frac1{y(t)}\le\frac1{y(t_0)}-K(t-t_0).
\end{equation}
The right-hand side vanishes at the finite time
\begin{equation*}
 T^\sharp:=t_0+\frac1{K y(t_0)}.
\end{equation*}
Thus \eqref{eq:inverse-y-integrated} forces $y(t)\to\infty$ as $t\uparrow T^\sharp$.

This contradicts the global-existence hypothesis.  Indeed, a global $H^1\times H^1$ solution is continuous on the compact interval $[0,T^\sharp]$, hence bounded there in $H^1\times H^1$; since $\nabla\phi_R\in L^\infty$, the definition \eqref{eq:Mphi} then gives $\sup_{0\le t\le T^\sharp}|\mathcal M_R(t)|<\infty$.  Therefore $T_+<\infty$, and the forward norm divergence follows from the blow-up alternative \eqref{eq:blowup-alternative}.

For the backward direction, define
\[
 \widetilde u(t,x)=\overline{u(-t,x)}, \qquad \widetilde v(t,x)=\overline{v(-t,x)}.
\]
Then $(\widetilde u,\widetilde v)$ solves \eqref{eq:system}, is radial, and has the same mass and energy as $(u,v)$.  Applying the forward argument to $(\widetilde u,\widetilde v)$ gives $T_-<\infty$.  This completes the proof.
\end{proof}

\end{document}